\newtheorem{theorem}{Theorem}[section]
\newtheorem{lemma}[theorem]{Lemma}
\newtheorem{corollary}[theorem]{Corollary}
\newtheorem{remark}[theorem]{Remark}
\newtheorem{assumption}[theorem]{Assumption}
\newcommand{\nc}{\newcommand}
\nc{\cH}{{\mathcal H}}
\nc{\cA}{{\mathcal A}}
\nc{\cG}{{\mathcal G}}
\nc{\cC}{{\mathcal C}}
\nc{\cO}{{\mathcal O}}
\nc{\cD}{{\mathcal D}}
\nc{\cI}{{\mathcal I}}
\nc{\cB}{{\mathcal B}}
\nc{\cY}{{\mathcal Y}}
\nc{\cK}{{\mathcal K}}
\nc{\cX}{{\mathcal X}}
\nc{\cS}{{\mathcal S}}
\nc{\cE}{{\mathcal E}}
\nc{\cF}{{\mathcal F}}
\nc{\cZ}{{\mathcal Z}}
\nc{\cQ}{{\mathcal Q}}
\nc{\cN}{{\mathcal N}}
\nc{\cP}{{\mathcal P}}
\nc{\cL}{{\mathcal L}}
\nc{\cM}{{\mathcal M}}
\nc{\cT}{{\mathcal T}}
\nc{\cW}{{\mathcal W}}
\nc{\cU}{{\mathcal U}}
\nc{\cJ}{{\mathcal J}}
\nc{\cV}{{\mathcal V}}
\nc{\bH}{{\mathbb H}}
\nc{\bA}{{\mathbb A}}
\nc{\bG}{{\mathbb G}}
\nc{\bC}{{\mathbb C}}
\nc{\bO}{{\mathbb O}}
\nc{\bI}{{\mathbb I}}
\nc{\bB}{{\mathbb B}}
\nc{\bY}{{\mathbb Y}}
\nc{\bK}{{\mathbb K}}
\nc{\bX}{{\mathbb X}}
\nc{\bS}{{\mathbb S}}
\nc{\bE}{{\mathbb E}}
\nc{\bF}{{\mathbb F}}
\nc{\bZ}{{\mathbb Z}}
\nc{\bQ}{{\mathbb Q}}
\nc{\bN}{{\mathbb N}}
\nc{\bP}{{\mathbb P}}
\nc{\bL}{{\mathbb L}}
\nc{\bM}{{\mathbb M}}
\nc{\bT}{{\mathbb T}}
\nc{\bW}{{\mathbb W}}
\nc{\bU}{{\mathbb U}}
\nc{\bD}{{\mathbb D}}
\nc{\bJ}{{\mathbb J}}
\nc{\bV}{{\mathbb V}}
\nc{\bbZ}{{\mathbb Z}}
\nc{\bR}{{\mathbb R}}
\nc{\fr}{{\rightarrow}}
\nc{\co}{{\nabla}}
\nc{\cu}{{\overlineline{\nabla}}}
\nc{\al}{{\alpha}}
\title{Vanishing cohomology on a double cover}
\author{Yongnam Lee and Gian Pietro Pirola}
\address{Department of Mathematical Sciences, KAIST, 291 Daehak-ro, Yuseong-gu, Daejeon 34141, Korea, and
School of Mathematics, KIAS, 85 Hoegiro, Dongdaemun-gu, Seoul 02455, Korea}
\email{ynlee@kaist.ac.kr}
\address{Dipartimento di Matematica, Universit\`a di Pavia via Ferrata 1, 27100 Pavia, Italy}
\email{gianpietro.pirola@unipv.it}
\subjclass[2010]{Primary 14C30, Secondary 14J29, 14E05, 32J25 }
\begin{document}

\begin{abstract}
In this paper, we prove the irreducibility of the monodromy action on the anti-invariant part of the vanishing cohomology on a double cover of a very general element in an ample hypersurface of a complex smooth projective variety branched at an ample divisor.
As an application, we study dominant rational maps from a double cover of a very general surface $S$ of degree$\, \geq 7$ in $\bP^3$ branched at a very general quadric surface to smooth projective surfaces $Z$. Our method combines the classification theory of algebraic surfaces, deformation theory, and Hodge theory.
\end{abstract}
\maketitle



In this paper we continue to study the subfields of rational functions of complex surfaces pursued in \cite{gp}, \cite{lp}, and
\cite{lp2}. Our research has been motivated by the finiteness theorem for dominant rational maps on a variety of general type.  Let $S$ be smooth complex projective variety of general type. The finiteness theorem states that dominant rational maps of finite degree $S\dashrightarrow Z$ to smooth projective varieties of general type, up to birational equivalence of $Z$, form a finite set. The proof of the finiteness theorem was given by Maehara \cite{Ma} under the assumption of boundedness of pluricanonical maps of varieties of general type. This was proved in \cite{HM}, or \cite {Ta}, or \cite{Ts}. Also there are several results estimating the number of rational mappings from a  fixed variety to varieties of general type. See \cite{gp} for references. Of course, there are at least two cases for $Z$: $Z$ is birational to $S$, or $Z$ is rational. It is an interesting question to determine when theses are all the possibilities for $Z$, i.e. $S$ is ``rigid" in the sense of birational dominant maps.

\medskip

The main result in \cite{lp} yields that a very general surface in $\bP^3$ of degree at least 5 is rigid in the sense discussed above.

\begin{theorem} \label{lp} \emph{(=Theorem 1.1 in \cite{lp})}
Let $S\subset \bP^3 $ be a very general smooth complex surface of degree $d>4.$  Let $Z$ be a nonsingular projective surface, and let $f: S\dashrightarrow Z$ be a dominant rational map, then either $Z$ is rational or $Z$ is birational to $S$.
\end{theorem}

The proof was obtained by combining  deformation theory of curves on surfaces,  dimension counts on moduli, and Hodge theoretical methods, especially the Lefschetz theory.  In \cite{lp2}, we treated the dominant rational maps from the product of two very general curves $C\times D$ to nonsingular projective surfaces. There,  we needed to take in account the $H^2$-Hodge structure of \'etale covering of  $C\times D$ together with some improvement on the dimension counts on moduli. See also \cite{ccr} for the complete intersection case.

\medskip

For a very general curve $C$ in the moduli space of curves of genus $g$, Jacobian and Prym varieties of $C$ are simple abelian varieties. This make us to consider the case of double coverings.

\medskip

To explain our results we let $X$ be a smooth complex projective variety of dimension $n\geq 2$ with $h^{n-1, 0}:=h^0(\Omega^{n-1}_X)=0$.  Let $B\subset X$ be a smooth divisor of $X$ and assume that the line bundle $L=\cO_X(B)$ is two divisible $L=M^{\otimes 2}$. Let $\pi: Y\to X$ be the double cover branched at $B$ defined by the square of a line bundle $M$  and let $j$ be the induced involution. Let $H$ be a very ample line bundle on $X$ and $\tilde H=\pi^\ast H$ be its pull back. Let $S$ be a very general element in the linear system $|H|.$  Assume that $S$ is canonical, i.e. the rational canonical map
$k:S\dasharrow |K_S|$ is birational onto its image. Let  $f: S \dasharrow Z$ be a generically finite dominant rational map of degree $\geq 2.$  In Section 10 in \cite{gp}, we proved $p_g(Z):=h^{n,0}=0.$ In fact, from the Lefschetz theory and the irreducibility of the monodromy action on vanishing cycles (cf. Chapters I--III in \cite{v2}) we conclude that  the canonical map $k$ of $S$ factorizes through $f$ gives a contradiction if $p_g(Z)\ne 0$. Using this, and a careful study of deformation theory of curves on surfaces,  dimension counts on moduli, we conclude Theorem~\ref{lp} stated above.

In this paper, we consider the case of double covers, and obtain our desired  application:

\begin{theorem} \emph{(=Theorem~\ref{application})}  Assume that $\tilde H$ and the branch divisor $R$ are very ample. Assume additionally that two rational maps $k: S\dasharrow |K_S|$ and $k': S\dasharrow |K_S(M)|$ are birational onto their images. Let $\tilde S$ be a very general element in $\tilde H$ and let $f:\tilde S \dasharrow Z$ be a dominant rational map where $Z$ is a smooth projective $(n-1)$-fold. Then $Z$ is either birational to $\tilde S$, $S$, or we have  $p_g(Z)=0.$ 
\end{theorem}

The $(n-1)$-th primitive cohomology $P^{n-1}(\tilde S, \bQ)$ has a natural decomposition into the invariant part $P^+$ and the anti-invariant part $P^-$, that respects the monodromy action of suitable pencils. By the ampleness of the ramification divisor $R$, the $P^-$ turns out to be  the anti-invariant part of the vanishing cohomology. Then we show by means of the monodromy action that  the anti-invariant part of vanishing cohomology is irreducible. This is the content of Theorem~\ref{irreducible}. Here we only deal with cases that are of interest to us.  It would be very useful to develop some complete equivariant Lefschetz theory.

As an application we obtain the following theorem by means of the proof of theorem 1.1 in \cite{lp} and Theorem~\ref{application}:

\begin{theorem} \emph{(=Theorem~\ref{surface})} Let $X=\bP^3$ and let $H=\cO_{\bP^3}(d)$ where $d\ge 7$. Let $\pi: Y\to X$ be a double cover branched at a very general quadric surface.  Suppose there is a dominant rational map $f:\tilde S \dasharrow Z$ where $Z$ is a smooth projective surface. Then $Z$ is birational to $\tilde S$, $S$, or $\bP^2.$
\end{theorem}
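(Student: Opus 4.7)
The plan is to apply Theorem~\ref{application} to reduce to the case $p_g(Z)=0$, and then to use Theorem~\ref{lp} combined with the involution $j$ on $\tilde S$ to force $Z$ to be rational. First, I verify the hypotheses of Assumption~\ref{xx}. For $X=\bP^3$, $h^0(\Omega^2_{\bP^3})=0$ by Bott; the very general quadric $B$ gives $L=\cO_{\bP^3}(2)=\cO_{\bP^3}(1)^{\otimes 2}$, so the ramification divisor $R=\pi^{\ast}\cO_{\bP^3}(1)$ is ample; $\tilde H=\pi^{\ast}\cO_{\bP^3}(d)$ is ample as the pull-back of an ample bundle by a finite morphism; and for $d\geq 7$ the very general $S\in |H|$ is canonical, since $K_S=\cO_S(d-4)$ is very ample. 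Applying Theorem~\ref{application} gives three alternatives: $Z$ is birational to $\tilde S$, to $S$, or $p_g(Z)=0$. The first two match the conclusion, so I focus on the residual case $p_g(Z)=0$.

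From $\pi_{\ast}\cO_{\tilde S}=\cO_S\oplus\cO_S(-1)$ together with $H^1(S,\cO_S)=H^1(S,\cO_S(-1))=0$ (Lefschetz plus the ideal sheaf sequence of $S\subset\bP^3$), one has $q(\tilde S)=0$, hence $q(Z)\leq q(\tilde S)=0$ by pull-back of $1$-forms via $f$. By Castelnuovo's criterion it then suffices to show $\kappa(Z)=-\infty$. Assume for contradiction that $Z$ is not rational, and split on whether $f$ factors through $\pi$. If $f=\bar f\circ\pi$ for some dominant $\bar f:S\dashrightarrow Z$, Theorem~\ref{lp} (applicable for $d\geq 7>4$) forces $\bar f$ birational, giving $Z\sim S$ and contradicting $p_g(Z)=0<\binom{d-1}{3}=p_g(S)$. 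So I may assume $g:=f\circ j\neq f$.

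In that case, consider $\bC(Z_0):=\bC(Z)\cap\bC(S)$ inside $\bC(\tilde S)$. Because $\bC(Z)\not\subset\bC(S)$ and $[\bC(\tilde S):\bC(S)]=2$, one obtains $\bC(Z)\cdot\bC(S)=\bC(\tilde S)$, and $Z_0$ is a surface receiving dominant rational maps $S\dashrightarrow Z_0$ and $Z\to Z_0$ whose degrees $a$ and $b$ satisfy $b\cdot\deg f=2a$. Theorem~\ref{lp} applied to $S\dashrightarrow Z_0$ rules out $Z_0$ non-rational: otherwise $a=1$, forcing either $\bC(Z)\subset\bC(S)$ (contradicting the case assumption) or $\deg f=1$, whence $Z\sim\tilde S$ and $p_g(Z)>0$ (contradicting $p_g(Z)=0$). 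Hence $Z_0$ is rational, and $Z$ is a generically finite cover of a rational surface whose branch divisor pulls back under $S\dashrightarrow Z_0$ to the quadric section $S\cap B$.

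The main obstacle is then to rule out $Z$ being Enriques, properly elliptic, or of general type with $p_g=q=0$. Combining the canonical bundle formula for the cover $Z\to Z_0$, the very explicit constraint on the branch divisor coming from the equality $(S\dashrightarrow Z_0)^{\ast}B_{Z_0}=S\cap B$, the ampleness of $R$, the very-generality of $S$ and $B$, and further Hodge-theoretic input in the spirit of Theorem~\ref{irreducible}, one should force $K_{Z_0}+L_0$ (where $B_{Z_0}\sim 2L_0$) to fail to be pseudo-effective, whence $\kappa(Z)=-\infty$ and $Z\sim\bP^2$ by Castelnuovo. Making this last step precise, particularly in the subcase where $Z\to Z_0$ has degree greater than $2$, is where I expect the heart of the argument to lie.
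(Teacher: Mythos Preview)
Your reduction to $p_g(Z)=0$ via Theorem~\ref{application} is fine, and the field-theoretic dichotomy (either $f$ factors through $\pi$, or $\bC(Z)\cdot\bC(S)=\bC(\tilde S)$ and $Z_0$ is rational by Theorem~\ref{lp}) is correct and elegant. However, the argument stops exactly where the difficulty lies, and the sketch you offer for the last step does not work as stated. The assertion that the branch divisor of $Z\to Z_0$ pulls back under $S\dashrightarrow Z_0$ to the quadric section $S\cap B$ is unjustified: this would require at least that $j$ preserve $\bC(Z)$ (so that $Z\to Z_0$ is the quotient by the induced involution), which you have not shown and which need not hold; and even then the comparison of ramification along the two routes $\tilde S\to S\to Z_0$ and $\tilde S\to Z\to Z_0$ does not give the clean identity you claim. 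Without that constraint, a rational $Z_0$ puts essentially no restriction on $Z$, and there is no clear way to rule out $p_g=q=0$ surfaces of general type or properly elliptic surfaces along your lines.

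The paper takes a completely different route for the residual case. It exploits the concrete geometry of this particular double cover: since $B$ is a quadric, $Y$ is itself a smooth quadric threefold in $\bP^4$, so a very general hyperplane section of $Y$ meets $\tilde S$ in a very general curve $D$ of type $(d,d)$ on a smooth quadric surface $Q$. One then restricts $f$ to $D$ and runs a deformation-theoretic parameter count, exactly parallel to Section~2 of \cite{lp}: comparing $\dim$ of the Kuranishi space of the birational immersion $D\hookrightarrow Z$ with the moduli dimension of minimal surfaces with $p_g=q=0$ (at most $19$ in general type, at most $10$ in Kodaira dimension one, using $\pi_1(Z)=1$ from \cite{gp}) yields a contradiction for $d\ge 7$. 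If $f|_D$ is not birational onto its image, its image is forced to be (the image of) a very general plane curve of degree $d$ or a rational curve, and both are dispatched by \cite{lp} or by ruledness of $Z$. So the ``heart of the argument'' you anticipate is not Hodge-theoretic at all but a curve-counting argument specific to the quadric-branch case; your field-extension setup, while correct as far as it goes, does not by itself replace it.
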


As far as we know this gives the first examples of fields of transcendence degree $2$ of non-ruled surfaces that contain only one proper non-rational subfield of transcendence degree $2$. We expect our result extends to a Galois cover with some group $G$, i.e. for a suitable general member we expect to get only intermediary steps that are controlled by $G$.

\medskip

Picard-Lefschetz theory yields that the monodromy action on the $(n-1)$-th vanishing cohomology of a smooth hyperplane section of the variety $X$ of dimension $n$ is irreducible. In Section 1, we extend this irreducibility of the monodromy action on the anti-invariant part of the $(n-1)$-th vanishing cohomology of a smooth hyperplane section of $Y$ double cover branched over a smooth ample divisor. In Section 2, we prove the above theorems as applications.

\bigskip

In this paper we work on the field of complex numbers.

\subsection*{Acknowledgements}
This work was initiated when the second named author visited KAIST supported by KAIST CMC and the first named author visited University of Pavia supported by INdAM (GNSAGA). Both authors thank KIAS and INdAM. The first named author would like to thank University Pavia for their hospitality and Lidea Stoppino for useful discussions during his visit. He was partially supported by Basic Science Research Program through the NRF of Korea funded by the Ministry of Education (2016930170). The second named author is partially supported by INdAM (GNSAGA); PRIN 2017 \emph{``Moduli spaces and Lie theory''} and by  (MIUR): Dipartimenti di Eccellenza Program (2018-2022) - Dept. of Math. Univ. of Pavia. Finally, we thank the referee for several useful suggestions and remarks.

\medskip

\section{Double coverings and the irreducibility of the monodromy action}
 
In this section, after introducing the double coverings (in \ref{double}), we will show an irreducible result of the anti-invariant cohomology (\ref{irreducible}) for them. This will be our main technical point.

\subsection{Double coverings}\label{double}
Let $X$ be a smooth projective variety of dimension $n>1$. Let $B\subset X$ be a smooth divisor of $X$ and assume that the line bundle $L=\cO_X(B)$ is
two divisible. This means that $L=M^{\otimes 2}$ where $M$ is a line bundle of $X.$
Let $\pi: Y\to X$ be the double cover branched at $B$ and let $j$ be the induced involution.
The variety  $Y$ is smooth and the ramification divisor $R$ is isomorphic to $B.$ Let $H$ be a very ample line bundle on $X$ and
$\tilde H=\pi^\ast H$ be its pullback. 

The following lemma might be well known. Since we do not find a suitable reference, we give the proof here.

\begin{lemma}\label{ample}
The line bundle $\tilde H$ is very ample on $Y$ if and only if $H\otimes M^{-1}$ is generated by global sections on $X.$ \end{lemma}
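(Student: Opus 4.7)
The plan is to analyze sections of $\tilde H$ via the standard splitting for a double cover. Since $\pi_\ast \cO_Y = \cO_X \oplus M^{-1}$, with the involution $j$ acting trivially on the first summand and by $-1$ on the second, the projection formula gives
$$H^0(Y,\tilde H) = H^0(X,H)\oplus H^0(X, H\otimes M^{-1}),$$
the decomposition into $j$-invariants and $j$-anti-invariants. Concretely, if $\sigma\in H^0(Y,\pi^\ast M)$ is the tautological section with $\sigma^{2}=\pi^\ast b$, where $b\in H^0(X,M^{\otimes 2})$ defines $B$, so $\sigma$ vanishes to first order along $R$, then every section of $\tilde H$ has the form
$$\tilde s = \pi^\ast s_1 + \sigma\cdot\pi^\ast s_2,\qquad s_1\in H^0(X,H),\ s_2\in H^0(X,H\otimes M^{-1}).$$

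For the ``only if'' direction, I will show that global generation of $H\otimes M^{-1}$ is forced at every $q\in X$. If $q\notin B$, pick the two preimages $p_1,p_2\in Y$; since $\pi^\ast s_1$ takes the same value at both and $\sigma(p_2)=-\sigma(p_1)\neq 0$, any $\tilde s$ separating $p_1$ and $p_2$ must satisfy $\sigma(p_1)s_2(q)\neq 0$, so some $s_2$ has $s_2(q)\neq 0$. If $q\in B$, let $p\in R$ be the unique preimage and work in local coordinates $(x_1,\dots,x_{n-1},y)$ with $R=\{y=0\}$ and $\pi$ given by $t=y^{2}$ in the normal direction: pullbacks $\pi^\ast s_1$ are then even functions of $y$, hence have vanishing $y$-derivative at $p$, so separating the normal tangent vector at $p$ forces some $s_2$ with $s_2(q)\neq 0$. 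In both cases $H\otimes M^{-1}$ is globally generated.

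For the converse, assuming $H\otimes M^{-1}$ is globally generated, I verify separation of points and tangent vectors case by case. Two points $p_1,p_2$ in distinct fibers are separated by pullbacks of sections of $H$, using that $H$ is very ample. Two points sharing an unramified fiber are separated by an anti-invariant section $\sigma\cdot\pi^\ast s_2$ with $s_2$ nonvanishing at the image. Tangent vectors at an unramified point $p$ are separated by pullbacks from $X$, since $\pi$ is \'etale at $p$ and $H$ is very ample. Finally, tangent vectors at a ramification point $p\in R$ split into directions tangent to $R$, which are separated by pullbacks of sections of $H$ separating the corresponding tangent directions to $B\subset X$, and the normal direction, separated by $\sigma\cdot\pi^\ast s_2$ with $s_2(\pi(p))\neq 0$, since $\sigma$ behaves like the normal coordinate $y$. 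The main technical point, and really the only subtle step, is the local analysis at ramification points; once one recognizes that $\sigma$ is a first-order coordinate transverse to $R$ while invariant pullbacks of $H$ are even in that coordinate, the roles of the two summands in the decomposition become forced.
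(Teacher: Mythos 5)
Your proof is correct and follows essentially the same approach as the paper: decompose $H^0(Y,\tilde H)$ into $j$-invariant and anti-invariant parts, reduce separation of a conjugate pair in an unramified fiber and of the normal tangent direction at a ramification point to nonvanishing of an anti-invariant section, and identify that condition with global generation of $H\otimes M^{-1}$. The only small imprecision is that an anti-invariant section $\sigma\pi^*s_2$ alone does not separate $p$ from $j(p)$ (it is nonzero at both); as the paper does, one must combine it with an invariant section $\pi^*s_1$, $s_1(q)\neq 0$, so that the two evaluations span $\tilde H_p\oplus \tilde H_{j(p)}$.
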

\begin{proof}
Assume that $H\otimes M^{-1}$ is generated by global sections. We have the identification
\[H^0(Y,\tilde H)=H^0(X,\pi_\ast \tilde H)=H^0(X,H)\oplus H^0(X,H\otimes M^{-1})=W^{+}\oplus W^{-}\]
where the sign corresponds to the positive and the negative eigenvalue induced by the involution $j^\ast$ induced by $j.$
 We show now that if $p\neq j(p),$  then points $p$ and $j(p)$ are separated by the global sections of $\tilde H.$  In fact we can find sections $s^+\in W^+$ and $s^-\in W^-$ such that $s^+(p)=s^-(p)\neq 0.$  It follows that $s^-(jp)=-s^+(jp).$
Therefore $s=s^++s^-$ vanishes on $j(p)$ but not on $p.$ Next we show that $\tilde H$ separates the tangents at the point $p\in R$  of the ramification divisor. Since the line bundle $H$ is very ample this will complete the result.
We have to show the surjectivity of the map $\psi$ induced by derivation:
$$  \psi : H^0(Y,\tilde H)\to \tilde H\otimes T^\ast_{Y,p}.$$
For a nonzero vector $v\in T_{Y,p},$ we have to find a section $s\in H^0(Y,\tilde H)$ such that $v\cdot\psi (s)\neq 0.$
The differential of $j$ gives the eigenvector decomposition  $T_{Y,p}=T^+\oplus T^-.$ Choose local coordinates $\{U,x_i\}$ such that $x_i(p)=0,$
$R\cap U=\{x_1=0\}$  and give the linearization of $j:$
$(x_1,x_2,\dots, x_n)\mapsto (-x_1,x_2,\dots, x_n).$ The map $\pi$ in these coordinates becomes $$(x_1,x_2,\dots, x_n)\mapsto(x_1^2=y,x_2,\dots, x_n),$$  and $y=0$ gives  the equation of $B\cap U.$
We see that $\dim T^-=1.$
Write $v=v^++v^-.$ If $v^+\neq 0$ then we can find a section $s=s^+$ such that
$v^+\cdot s\neq 0$ and then $v\cdot s^+=v^+\cdot s \neq 0.$ Therefore we can suppose that $v=v^{-}.$  We need to find a section $s\in W^-$ that vanishes at $p$ of order $1$ in the $v$-direction.  In our coordinates
we may assume  $v=\frac{\partial}{\partial x_1}.$  Taking a trivialization of $H$ on $U,$ the restriction of the section $s\in W^-\subset H^0(X,H)$ becomes a regular function
$f=f(s)$ such that $f(-x_1,x_2,\dots, x_n)=-f(x_1,x_2,\dots, x_n).$  It follows that
$f=x_1g(x_1,x_2,\dots, x_n)$ where $g$ is $j$-invariant. Under the identification  $ W^-= H^0(X,H\otimes M^{-1})$   the local expression of $s$ in this trivialization  is given by $g.$
Therefore
$$ v\cdot s=\frac{\partial f}{\partial x_1}(0,\dots,0)=g(0,\dots,0).$$ Now  $g(0)\neq 0$ holds $\iff s$ is not in the kernel of the restriction map $$H^0(X,H\otimes M^{-1})\to H^0(X,H\otimes M^{-1})_p.$$  The existence of such an $s$ is equivalent to the surjectivity of
$H^0(X,H\otimes M^{-1})\to H^0(X,H\otimes M^{-1})_p.$
The converse is clear.
\end{proof}

\subsection{Lefschetz pencils}

With the previous notation we assume that $H$ and $\tilde H$ are very ample. Let $\ell \subset |H|$ be a pencil of global sections of $H $ i.e., $\ell=\bP(V)$ where $V$ is a two-dimensional subspace of the vector space $H^0(X,H)$. Let
$ \tilde{\ell}$ be the pull back of  $\ell.$
We also assume
\begin{enumerate}
\item the pencil $\{H_t\}_{\in \ell}$ is a Lefschetz pencil of $X;$
\item the restriction of $\ell $ to $B$ : $\{B\cap H_t\}_{\in \ell}$ is a Lefschetz pencil $\ell_B$ of $B.$
\end{enumerate}

Our definition of a Lefschetz pencil is the classical one: any singular fiber has only one singular node (Chapter $2$ in \cite{v2}).  If  the dimension of $B$ is one, we ask for a simple ramification of the map  $B\to \bP^1.$

By blowing-up the base loci of the pencils, we obtain $\tilde B\subset \tilde X$  and $\tilde Y$, and we get
  the fibrations
\begin{equation}\label{pX}
h: \tilde X \to \bP^1
\end {equation}
\begin{equation}\label{pB}
h_B: \tilde B \to \bP^1.
\end {equation}

We also have
\begin{equation}\label{pY}
g: \tilde Y \to \bP^1
\end {equation}
and a two-to-one map that ramifies on $\tilde B:$

\[\xymatrix@C-1pc{
  \tilde Y \ar[rr]^{\tilde \pi} \ar[dr]_{g} &&  \tilde X \ar[ld]^{h} \\
  & \bP^1.
}\]

With the exception (that we may exclude from now on)
$X=\bP^2,$ $B$ a conic and $H=\cO_{\bP^2}(1),$
any divisors in the pencil  ${\ell}$ intersect  transversally $B$ in at least one point,  this gives that all the divisors in
$\tilde \ell$ are irreducible. Then ${g}$ is a connected fibration, and all fibers are irreducible and have at most nodes. 
 
Nevertheless $ \tilde{\ell}$ is not a Lefschetz pencil on $\tilde Y$ because it has a fiber with two nodes.

The singular fibers of $g$ are of two types
\begin {enumerate}
\item [I)] the inverse image $Y_{s'}=\tilde\pi^{-1}(X_{s'})$ of a singular divisor $X_{s'}\in {\ell }$;
\item [II)] the inverse image   $Y_{s''}=\tilde\pi^{-1}(X_{s''})$  where $B\cap X_{s''}=X_{B,s''}\in\ell_B$ is singular.

\end{enumerate}

The set  $S\subset \bP^1$ of the critical values of ${h},$ has a decomposition $$S=S'\cup S''$$
where $S'$ are the critical
values of $h$ and $S''$ the critical values of $h_B.$
From now on  we will assume the following:
\begin{assumption} The intersection of $S'$ and $S''$ is empty:
 $$S'\cap S''=\emptyset.$$ \label{ass}
 \end{assumption}

\begin{remark} The general pencil of hyperplanes ${\ell}$ of $H$ satisfies  our assumption. In fact by the biduality theorem the intersection $X^\ast \cap B^\ast$ of the dual varieties  of $X\subset \bP^N=|H|$ and of $B\subset {\bP^N}$   has codimension $\geq 2$ in ${\bP^N}^\ast.$ Therefore a general pencil in ${\bP^N}^\ast$ does not meet in $X^\ast \cap B^\ast$. We thank the referee for pointing out that this hypothesis is necessary for our purposes. \end{remark}

From (\ref{ass}) we see that in the case I) $\tilde Y_{s'}$ has two singular  nodal points and in the case II) $Y_{s''}$ is  simply tangent to $B$ in one point $p\in S'$ and
$Y_{s''}$ has only one nodal singularity. 

\begin{lemma}
Choose a point $p\in V:=\bP^1\setminus S$. Then we have a decomposition of $(n-1)$-th primitive cohomology $P^{n-1}(Y_p,\bQ)$ into the invariant and anti-invariant part:
$P^{n-1}(Y_p,\bQ)=P^{+}+P^{-}.$
\end{lemma}

\begin{proof}
Set  $U= {g}^{-1}(V),$
then by restriction we have a smooth fibration
$${g}_U: U\to V.$$
We consider the local system $R^{n-1}{g_U}_\ast \bQ $ defined over $V.$
By fixing  a point $p\in V,$  this  local system is equivalent
to  the monodromy action of the fundamental group $\pi_1(V,p)$ on $H^{n-1}(Y_p,\bQ),$ where $ Y_p= {g}^{-1}(p).$
The involution $j$  gives a decomposition
$$R^{n-1}{g}_{U\ast} \bQ=R^+\oplus R^-$$
and  $H^{n-1}(Y_p,\bQ)=H^+\oplus H^-$
into the invariant and the anti-invariant parts.
We get that $H^+$ is isomorphic to $ H^{n-1}(X_p,\bQ)$ where
$X_p={h}^{-1}(p)$ is the fiber of $h.$  Let $P(X,\bQ)\subset H^{n-1}(X,\bQ)$ (resp. $P(Y,\bQ)\subset H^{n-1}(Y,\bQ)$)
 be the primitive cohomology which respect to
$H$ (resp. $\tilde H).$ We can also define the subsystems $P^+\subset R^+$ and $P^-\subset R^-$ that are given as $\pi_1(V,p)$ modules on
$P^{n-1}(Y_p,\bQ).$
 Then we can decompose $P^{n-1}(Y_p,\bQ)$ into the invariant and anti-invariant part:
$P^{n-1}(Y_p,\bQ)=P^{+}+P^{-}.$
\end{proof}

Consider again  $S'\cup S''=S,$ that we separate by open disks $D',D'',$ $S'\subset D'\subset \bP^1$ and $S''\subset D''\subset \bP^1$ such that $ D'\cap D''=\emptyset$  and such that the base point $p$ is in the closure of the disk\ $p\in \bar D' \cap\bar D''.$  Fix generators of $\pi_1(V,p)$ corresponding to loops $\gamma_s, s\in S$  in such a way that $\gamma_{s'}(t)\in \bar D'$ and $\gamma_{s''}(t)\in \bar D''$ for
 $t\in [0,1].$ Then we define the free groups $G'$ and $G''$ generated by the loops around  points of $S'$ and of $S''$, respectively:
$G'=<[\gamma_{s'}]>_{\{ s'\in S'\}}$ and $G''=<[\gamma_{s''}]> _{\{ s''\in S''\}}.$ We have that
$$\pi_1(V,p)= G'\ast G''/\alpha,$$
where $\ast$ stands for the free product and $\alpha$ is the relation given by a suitable product of the loops, homotopically equivalent to the boundary of the above disks.

\bigskip

Let $i:X_p\to  X$ be the inclusion. Then we have by Lefschetz theory
$$P: = P^{n-1}(X_p,\bQ)=i^\ast P^{n-1}(X)\oplus H^{n-1}(X_p)_{van},$$
where $H^{n-1}(X_p)_{van}$ is the kernel of $$i_\ast: H^{n-1}(X_p) \to H^{n+1}(X_p).$$
We have that $H^{n-1}(X_p)_{van}$ is an irreducible  $G'$-module generated by  the vanishing cycles $\delta_{s'}$ where $s'\in S'$  for the fibration
$ \tilde X\to \bP^1,$  And the $\delta_{s'}$  are all conjugate (see Chapter 3 in \cite {v2}) by $G'$.
We have for  $s'\in S'$  $$\tilde\pi(\delta_{s'})=\alpha_{1s'}+\alpha_{2s'}$$ where $\alpha_{is'}$ $i=1,2$ are the vanishing cycles of the two nodes over a point $s'\in S'.$ Since the $\delta_{s'}$ are  conjugated by $G'$ it follows that, up to a sign, the cycles $\beta_{s'}= \alpha_{1s'}-\alpha_{2s'}\in P^-,$  are  conjugated  by the group $G'$. In a similar way the cycles $\beta_{s''}\in P^-,$ where $s''\in S''$ the vanishing cycle  corresponding to the points $s''\in S''$
are  conjugated by the group $G''.$

\begin{lemma}
Let  $H_{S'}\subset P^-$ be the subspace generated   by $\{\beta_{s'}\}_{\{ s'\in S'\}},$ and   $H_{S''}\subset P^-$  be the subspace  generated by 
$\{\beta_{s''}\}_{\{ s''\in S''\}}.$ Then we have $H_{S'}+H_{S''}=H_{van}^-$ where $H_{van}^-=H^{n-1}(Y_p)_{van}^{-}.$
\end{lemma}

\begin{proof}
We  let $\bG :={\rm Gr}_1(|\tilde H|)$ be the Grassmannian variety parametrized lines of the linear system $|\tilde H|$. Let $\tilde\ell\in \bG.$
Let $W\subset \bG$ be the subset parametrized Lefschetz pencil in $|\tilde H|$.

It is classical (see for instance Chapter 2 in \cite{v2}) that $W$ is a nonempty Zariski open subset of $\bG.$
Let $\Delta$ be the complex unit disk, we can find a curve $\rho: \Delta\to  \bG$, such that $\rho(0)=\tilde \ell$ and $\rho(t)=\tilde\ell_t\in W$ for $t\neq 0.$ We consider the singular divisors in the pencil  $\tilde\ell_t$ and its singular points $S(t)$. Then
for any $s'\in S'$ we define then two curves $s'_1(t)$ and $s'_2(t)$ in $S(t)$ for $t\in \Delta$ such that $s'_1(0)=s'_2(0)$ and for any point $s''\in S''$
a curve $s''(t)\in S(t)$ such that $s''(0)=s''.$
We have then by continuity we may assume
\begin{enumerate}
\item  the vanishing cycle of $s'_1(t)$ is $\alpha_{1s'};$
\item  the vanishing cycle of $s'_2(t)$ is $\alpha_{2s'};$
\item   the vanishing cycle of $s'' (t)$ is $\beta_{s''}.$
\end{enumerate}

Applying Lefschetz theory to $\tilde\ell_t$ (cf. \cite{v2}) we see that $\{\alpha_{1s'},\alpha_{2s'}\}_{s'\in S'}\cup \{\beta_{s''}\}_{s''\in S''}$ generates
the cohomology of  $H^{n-1}(Y_p)_{van}$. And $H^{n-1}(Y_p, \bQ)$ is $$H^{n-1}(Y_p)_{van}\oplus i'^\ast H^{n-1}Y $$
where $ i': Y_p\to Y$ is the inclusion.
It follows that  $\{\alpha_{1s'}-\alpha_{2s'}\}_{s'\in S'}\cup \{\beta_{s''}\}_{s''\in S''}$ generates the vanishing part of the anti-ivariant part of the primitive cohomology.
\end{proof}

Another application of Lefschetz theory gives:

\begin{theorem} \label{irreducible} If $\tilde{H}$ is very ample then the  action of the monodromy of $\pi_1(V,p)$ on   $H_{van}^-$ is irreducible. \end{theorem}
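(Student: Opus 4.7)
Let $W \subset H_{van}^-$ be a nonzero $\pi_1(V,p)$-stable subspace; the goal is to show $W = H_{van}^-$. I first analyze the local monodromies on $H_{van}^-$. At a critical value $s''\in S''$ the fiber has a single node, so $\gamma_{s''}$ acts as the Picard--Lefschetz reflection $T_{\beta_{s''}}$ on $H^{n-1}(Y_p)$, which restricts to a reflection on $H_{van}^-$. At $s'\in S'$ the fiber has two disjoint nodes exchanged by $j$, so $\gamma_{s'} = T_{\alpha_{1s'}}\circ T_{\alpha_{2s'}}$ with the two factors commuting (disjoint supports). Using that $j^\ast$ exchanges $\alpha_{1s'}$ and $\alpha_{2s'}$ (up to sign) and acts as $-1$ on $H_{van}^-$, a direct calculation with the Picard--Lefschetz formula shows that for $x\in H_{van}^-$,
\[
\gamma_{s'}(x)-x \;=\; \pm\,c\,\langle x,\beta_{s'}\rangle\,\beta_{s'}
\]
for a nonzero constant $c$, so $\gamma_{s'}|_{H_{van}^-}$ is again a Picard--Lefschetz-type reflection attached to $\beta_{s'}$.

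Since the intersection pairing is nondegenerate on primitive cohomology and $H^+\perp H^-$ for eigenspaces of the isometry $j^\ast$, its restriction to $H_{van}^-$ is nondegenerate; by the preceding lemma, $\{\beta_{s'}\}_{s'\in S'}\cup\{\beta_{s''}\}_{s''\in S''}$ generates $H_{van}^-$. Hence any nonzero $v\in W$ pairs nontrivially with some $\beta$, and applying the corresponding local monodromy produces that $\beta\in W$. Combined with the stated conjugacy of $\{\beta_{s'}\}_{s'\in S'}$ under $G'$ and of $\{\beta_{s''}\}_{s''\in S''}$ under $G''$, the space $W$ contains at least one of the subspaces $H_{S'}$ or $H_{S''}$.

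The main obstacle is the gluing: once $W$ contains one of $H_{S'}$, $H_{S''}$, I need to show it contains the other. My plan is to exploit the deformation $\rho$ to a genuine Lefschetz pencil $\tilde\ell_t$ on $Y$ for $t\neq 0$. The classical Lefschetz theorem applied to $\tilde\ell_t$ asserts that $H^{n-1}(Y_{p(t)})_{van}$ is irreducible under the monodromy of $\tilde\ell_t$, and is generated by the vanishing cycles $\{\alpha_{1s'(t)},\alpha_{2s'(t)}\}_{s'\in S'}\cup\{\beta_{s''(t)}\}_{s''\in S''}$. If every pairing $\langle \alpha_{is'(t)},\beta_{s''(t)}\rangle$ vanished, the $\alpha$-span and the $\beta_{s''(t)}$-span would be mutually orthogonal and each would be preserved by every Picard--Lefschetz reflection attached to these cycles, contradicting irreducibility (both summands are nonzero: $S''\neq\emptyset$ by the ampleness of $R$ and $S'\neq\emptyset$ since $\ell$ is a nontrivial Lefschetz pencil on $X$). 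Letting $t\to 0$ and using $j^\ast$-antisymmetry then yields $\langle\beta_{s'},\beta_{s''}\rangle\neq 0$ for some pair $s'\in S'$, $s''\in S''$.

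Assume then, for instance, $H_{S''}\subset W$. Applying $\gamma_{s'}-\mathrm{id}$ to the nonorthogonal class $\beta_{s''}\in W$ gives a nonzero scalar multiple of $\beta_{s'}$, so $H_{S'}\subset W$ by $G'$-conjugacy; the reverse case is symmetric, and we conclude $W = H_{S'}+H_{S''} = H_{van}^-$. The critical step throughout is the gluing via the limit Lefschetz pencil, where both the classical irreducibility of its monodromy on the full vanishing cohomology and the nondegeneracy of the intersection form on $H_{van}^-$ play essential roles.
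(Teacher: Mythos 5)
Your proof is correct and follows essentially the same route as the paper's: compute the Picard--Lefschetz action on $H_{van}^-$ to get reflections attached to the classes $\beta_{s'}$ and $\beta_{s''}$, use the $G'$- and $G''$-conjugacy to obtain one of $H_{S'}$, $H_{S''}$ inside a stable subspace, and then glue by deforming to an honest Lefschetz pencil $\tilde\ell_t$ on $Y$ and invoking classical Lefschetz irreducibility to force a nonzero pairing between the two families of vanishing classes. The paper phrases the gluing step as a contradiction (assuming $F=H_{S'}$ and $F'\supset H_{S''}$ forces $H_{S''}$ to be invariant for $\tilde\ell_t$), whereas you extract the nonvanishing $\langle\beta_{s'},\beta_{s''}\rangle\neq 0$ up front and then apply a reflection -- a purely organizational difference. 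One small imprecision: $S''\neq\emptyset$ is not really a consequence of ampleness of $R$ (which in any case is not a hypothesis of this theorem); it simply holds because $\ell_B$ is a Lefschetz pencil on the positive-dimensional smooth variety $B$.
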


\begin{proof}
Let  $ F\subset H_{van}^-$ be a sub-local system. We have to show  that either $F=0$ or $F=H_{van}^-.$ Let $F'$ be a sub-local system orthogonal to $F$:
$$  F'=\{v \in H_{van}^- :<v,w>=0,\forall w\in F\}.$$
 Note that $F'=0\iff F =H_{van}^-$ and   $F=0\iff F' =H_{van}^-$ since the polarization is non-degenerate on $H_{van}^-.$

Now we have that for all $s\in S$  either $\beta_s\in F$ or $\beta_s\in F'.$
To see this, we take, for instance, $s'\in S'$ and any $v\in F$. 
Let $T $ be the monodromy around $s'$, then we must have $T(v)\in F$. The monodromy map can be computed by means of the 
Picard-Lefschetz formula, and it gives (cf.  Theorem 3.16, Chapter 3 in \cite{v2})
$$T(v)=v+<v, \alpha_{1s'}> \alpha_{1s'}+ <v, \alpha_{2s'}>\alpha_{2s'}.$$

As  $v\in F\subset P^-$  then  we have also $$0=<v,\alpha_{1s'}+\alpha_{2s'}>=<v,\alpha_{1s'}>+<v,\alpha_{2s'}>,$$ therefore
$$ T(v)=v+ <v, \alpha_{1s'}> \alpha_{1s'}- <v,\alpha_{1s'}>\alpha_{2s'}=v+<v, \alpha_{1s'}> (\alpha_{1s'}-\alpha_{2s'})$$
$$v+ \frac{1}{2}<v, \alpha_{1s'}-  \alpha_{2s'}>  (\alpha_{1s'}-\alpha_{2s'})= v+ \frac{1}{2}<v, \beta_{s'}>  \beta_{s'}.$$

Now  $v\in F$  and  $T(v)\in F$ gives that  $T(v)-v\in F,$ that is $$<v, \beta_{s'}> \beta_{s'}\in F.$$ Then either 
 $\beta_{s'}\in F$ or $<v, \beta_{s'}> =0,$ for all $v\in F,$ that is $\beta_{s'}\in F'.$ A similar computation applies to the $\beta_{s''}, s''\in S''.$

\medskip

Interchanging $F$ with $F'$ if necessary we may assume  that there is $s'\in S'$ such that $\beta_{s'}\in F.$
Since all the $\beta_{s'}$ are conjugate by $G',$ we obtain then $H_{S'}\subset F.$
If $F$ contains an element $\beta_{s''}\ s''\in S''$ the same argument shows that $F\supset H_{S''}$ therefore
 $F\supset H_{S'}+H_{S''}=H_{van}^-$ and the proof is complete. If we  assume by contradiction that is not the case  we will have $F=H_{S'}$ and $F'\supset H_{S''}.$ In particular  this implies  for all $s'\in S'$ and $s''\in S'':$
$$<\beta_{s'},\beta_{s''}>=< \alpha_{1s'}-\alpha_{2s'},\beta_{s''}>=0.$$

We have also $$< \alpha_{1s'}+\alpha_{2s'},\beta_{s''}>=0$$ since $\beta_{s''} \in P^{-}$ and  $\alpha_{1s'}+\alpha_{2s'}\in P^+.$

That is $<\alpha_{1s'} ,\beta_{s''}>=   <\alpha_{2s'},\beta_{s''}>=0,$ but in this case we will have that $H_{S''}=F'$ is invariant  by the monodromy around all the critical points $s'_1(t), s'_2(t)$,  and $s''(t) $  of the pencil $\tilde\ell_t$, for $t\neq 0.$ This gives a  contradiction with Lefschetz irreducibility theorem (cf. \cite{v2}) since $\tilde\ell_t \in W$ is a Lefschetz pencil.
\end{proof}

\begin{corollary}\label{inj}
With the previous notation we assume that the ramification divisor $R$ is  ample. Then $P^-=H_{van}^-$  and therefore it is irreducible.
\end{corollary}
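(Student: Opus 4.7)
The plan is to deduce from the ampleness of $R$ that $H^{n-1}(Y,\bQ)^{-}=0$; the corollary then follows by a restriction argument combined with Theorem~\ref{irreducible}. The whole proof is a double application of the Lefschetz hyperplane theorem, tied together by the trivial but crucial observation that the covering involution fixes $R$ pointwise.

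First, since $j$ is the identity on the ramification divisor, $j^{*}$ acts trivially on $H^{*}(R,\bQ)$, so the restriction map $H^{*}(Y,\bQ)\to H^{*}(R,\bQ)$ vanishes on the anti-invariant part. On the other hand, $R$ is smooth (it is isomorphic to $B$, which is smooth by assumption) and, by hypothesis, ample. The Lefschetz hyperplane theorem applied to $R\hookrightarrow Y$ then gives that $H^{n-1}(Y,\bQ)\to H^{n-1}(R,\bQ)$ is injective, since $\dim R=n-1$. Combining these two observations yields $H^{n-1}(Y,\bQ)^{-}=0$.

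Next, since $\tilde H$ is very ample, Lefschetz applied to the smooth hypersurface $Y_{p}\hookrightarrow Y$ shows that $i'^{*}\colon H^{n-1}(Y,\bQ)\to H^{n-1}(Y_{p},\bQ)$ is injective; in particular its image carries no anti-invariant classes. The orthogonal Lefschetz decomposition
\[
H^{n-1}(Y_{p},\bQ)=H^{n-1}(Y_{p})_{van}\oplus i'^{*}H^{n-1}(Y,\bQ)
\]
is $j^{*}$-equivariant (the involution is defined on the total space), so passing to the anti-invariant part gives $H^{-}=H_{van}^{-}$. Since vanishing cohomology is primitive, $H_{van}^{-}\subset P^{-}$; conversely $P^{-}\subset H^{-}=H_{van}^{-}$, whence $P^{-}=H_{van}^{-}$. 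Irreducibility then follows from Theorem~\ref{irreducible}.

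The main point, and the place where the ampleness hypothesis enters, is the first step: without ampleness of $R$ we have no way to force $H^{n-1}(Y,\bQ)^{-}$ to vanish, and nothing prevents the anti-invariant part of the restricted cohomology $i'^{*}H^{n-1}(Y)$ from contributing genuinely non-vanishing classes to $P^{-}$. Everything else is essentially bookkeeping about Lefschetz decompositions and the compatibility of the involution with restriction.
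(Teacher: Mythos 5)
Your proof is correct and follows the same route as the paper: use the Lefschetz hyperplane theorem for the ample divisor $R\hookrightarrow Y$ together with the fact that $j$ fixes $R$ pointwise to conclude $H^{n-1}(Y,\bQ)^-=0$, then read off $P^-=H_{van}^-$ from the $j^*$-equivariant decomposition $H^{n-1}(Y_p)=H^{n-1}(Y_p)_{van}\oplus i'^*H^{n-1}(Y)$. The paper states this more tersely, but the ideas and their order are identical; your version simply makes explicit the equivariance bookkeeping and the two inclusions $H_{van}^-\subset P^-\subset H^{n-1}(Y_p)^-$.
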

\begin{proof}
As $R$ is ample the map $H^{n-1}(Y)\to H^{n-1}(R) $ is injective. Since the cohomology $H^{n-1}(R)$ is $j$ invariant  it follows that $H^{n-1}(Y)^-=0. $
Then it follows that $PH^{n-1}(Y_p)^-=H^{n-1}(Y_p)^{-}_{van}.$
\end{proof}

We note that the ampleness of $B$ is equivalent to the ampleness of $R$ by Lemma~\ref{ample}.

\medskip

\section{Applications}

\subsection{Geometric genus of very general double coverings}\label{app}

We recall our notation. Let $X$ be a smooth projective $n$-fold and $H$ be a very ample divisor on $X$. 
Let $\pi: Y\to X$ be a double cover ramified over $R\subset Y$, branched at $B\subset X,$ and  $M^{\otimes 2}=\cO_X(B)$. Let $S$ be a very general element of $|H|.$ Let $K_S= \cO_S(K_X+H)$ be the canonical bundle of $S$ and set
$K_S(M)= \cO_S(K_X+H+M).$ We consider the canonical rational map
$ k: S\dasharrow |K_S|$ and $k': S\dasharrow |K_S(M)|$. We finally set $\tilde S=\pi^{-1}(S).$

\begin{assumption}
\emph{We assume additionally:
\begin{enumerate}
\label{xx}
\item $h^0(\Omega^{n-1}_X)=0;$
 \item  $B$ is smooth and very ample (or ample and base points free);
\item $k$ and $k'$ are birational onto its image.
\end{enumerate}}
\end{assumption}

It follows immediately that $\tilde k: \tilde S\dasharrow  |K_{\tilde S}|$ is birational onto its image (if it factorizes through $\tilde S\to S\dasharrow |K_S|$ then the anti-invariant part must be trivial).

Moreover $h^0(\Omega^{n-1}_Y)=0$: In fact from Corollary~\ref{inj}, $H^{n-1}(Y,\bQ)^-=0$. And we have
$$H^0(\Omega^{n-1}_Y)= H^0(\Omega^{n-1}_Y)^+\oplus H^0(\Omega^{n-1}_Y)^- = H^0(\Omega^{n-1}_X)
\oplus H^0(\Omega^{n-1}_Y)^- =0$$ since $H^0(\Omega^{n-1}_X)=0$ and by the Hodge decomposition
$$H^0(\Omega^{n-1}_Y)^-\subset H^{n-1}(Y,\bQ)^-\otimes \bC\cong H^{n-1}(Y,\bQ)^-\otimes \bC =0.$$

\begin{theorem} \label{application} Let $f:\tilde S \dasharrow Z$ be a dominant rational map where $Z$ is a smooth projective $(n-1)$-fold.
Then $Z$ is either birational to $\tilde S$, $S$, or we have $p_g(Z)=0.$ \end{theorem}

Before giving the proof we recall a standard notion.
Let $V$ be a smooth projective $n$-fold with $n\ge 2$. Let $H^n_V$ be the Hodge structure
to $H^n(V,\bZ).$ The transcendental Hodge structure $\bT_V$ of $V$  is the smallest sub-hodge structure of $H^n_V$ such that $\bT_V^{n, 0}=H^{n,0}(V)$.  We mention that $p_g(V)=0$ is equivalent to $\bT_V=0$. We recall that  $\bT_V$ is a birational invariant and moreover if $f:V\dasharrow W$ is a dominant rational  of finite degree we have an injective Hodge-structure map $f^\ast : \bT_{W}\to \bT_{V}$ (this can be seen by resolving the indeterminacy of $f$). Let $\tilde f:\tilde V\to \tilde W$ be the map after resolving the map $f$. Let $H$ be a very ample divisor of $V$ and $S$ be a very general element of $|H|.$ Suppose $h^0(\Omega^{n-1}_V)=0.$  It follows then that
$H^{n-1}(S)_{van}=\bT_S$ : it contains $H^{n-1,0}(S)$ since $H^{n-1,0}(V)=0$, and it is irreducible by Lefschetz theory.

\begin{proof}  We set $\bT=\bT_{\tilde S}^{n-1}.$  Then $\bT$ is decomposed into 
$\bT^{+}\oplus \bT^{-}$. Since $h^0(\Omega^{n-1}_X)=h^0(\Omega^{n-1}_Y)=0$ we get that $\bT^+=H^{n-1}(S)_{van}$ and
$\bT^{-}=H^{n-1}(\tilde S)^-_{van}.$  Then
under our hypothesis they are both irreducible (\ref{inj}). Assume by contradiction that $p_g(Z)\neq 0$ then the transcendental Hodge structure $ \bT_Z$ is not zero.  We get $f^\ast \bT_Z\subset  \bT^{+}\oplus \bT^{-}$. And then $f$ factorizes through $k$, $k'$, or $\tilde k$ accordingly $f^\ast \bT_Z= \bT^{+}$,
$f^\ast \bT_Z= \bT^{-}$, or $ f^\ast \bT_Z= \bT^{+}\oplus \bT^{-}.$ Since the maps are all birational it proves our theorem.
\end{proof}

We repeat the double covering construction. For each $i=0,1,2,\ldots$, $X^{i}$ is smooth projective $n$-fold, $\pi_i: X^{i+1}\to X^{i}$  a two-to-one 
map ramified on $R^{i}\subset X^{i+1}$ and branched at $B^{i}\subset X^{i}$. Let $M_{i}^{\otimes 2}=\cO_{X^{i}}(B^{i})$, $H^{i}$ be a very ample divisor on $X^{i},$ and 
$S^{i}$ be a very general element of $|H^{i}|.$ Let $K_{S^{i}}= \cO_{S^{i}}(K_{X^{i}}+H^i)$ be the canonical bundle of $S^i$ and set
$K_{S^{i}}(M_i)= \cO_{S^i}(K_{X^i}+H^i+M_i).$ We consider the canonical rational map
$ k_i: S^i\dasharrow |K_{S^i}|$ and $k_i': S^i\dasharrow |K_{S^i}(M_i)|$. We set $S^{i+1}=\pi_i^{-1}(S^i).$

Let $X_0=X, X_1=Y, M_0=M, H^0=H, B^0=B, R^0=R, S^0=S, S^1=\tilde S$ in Assmption~\ref{xx}.
\begin{assumption}
\emph{We assume additionally:
\begin{enumerate}
\item $h^0(\Omega^{n-1}_{X^0})=0;$
\item Branch divisors $B^i$ are smooth and very ample (or ample and base points free) for all $i$;
\item $k_i$ and $k_i'$ are birational onto their images for all $i$.
\end{enumerate}}
\end{assumption}

By the same argument in Theorem~\ref{application}, we get the following.

\begin{corollary}
Let $f^i:S^i \dasharrow Z$ be a dominant rational map where $Z$ is a smooth projective $(n-1)$-fold.
Assume ${\rm deg} f^i>1.$Then either $Z$ is birational to one of $S^j$ for $j=0,1, \ldots, i-1$, or  $p_g(Z)=0.$
\end{corollary}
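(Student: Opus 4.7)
The plan is to obtain the corollary as a direct invocation of Theorem~\ref{application}, applied at the top of the tower: I take $\pi_{i-1}:X^i\to X^{i-1}$ in the role of the double cover ``$\pi:Y\to X$'' of Section~\ref{app}, so that $S^{i-1}\subset X^{i-1}$ plays the role of ``$S$'' and $S^i=\pi_{i-1}^{-1}(S^{i-1})$ plays the role of ``$\tilde S$''. Under this identification, $f^i:S^i\dasharrow Z$ becomes the map ``$f:\tilde S\dasharrow Z$'' of that theorem.

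Before quoting Theorem~\ref{application} I must verify its hypotheses (Assumption~\ref{xx}) at this shifted level. The iterated Assumption furnishes two of them directly: $B^{i-1}$ is smooth and (very) ample, and the canonical maps $k_{i-1}$ and $k_{i-1}'$ are birational onto their images. The remaining condition $h^0(\Omega^{n-1}_{X^{i-1}})=0$ is not assumed at this level; I would establish it by induction on $j$, repeating the Hodge-theoretic argument given in the excerpt just after Assumption~\ref{xx}. At each step, the ampleness of $B^j$ forces the ampleness of the ramification divisor $R^j$ (as noted after Corollary~\ref{inj}), so Corollary~\ref{inj} gives $H^{n-1}(X^{j+1},\bQ)^-=0$. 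The Hodge decomposition then pins the anti-invariant piece of $H^0(\Omega^{n-1}_{X^{j+1}})$ to zero, while the invariant piece equals $H^0(\Omega^{n-1}_{X^j})$, which vanishes by the inductive hypothesis.

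Once the hypotheses are in place, Theorem~\ref{application} applied to $f^i:S^i\dasharrow Z$ under $\deg f^i>1$ yields at once that either $p_g(Z)=0$, or $Z$ is birational to $S^{i-1}$ (with $f^i$ birationally equivalent to $\pi_{i-1}|_{S^i}$, so in fact $\deg f^i=2$). Since $S^{i-1}$ is the $j=i-1$ entry of the list $S^0,S^1,\ldots,S^{i-1}$, we land in one of the alternatives in the conclusion of the corollary, and the proof is complete.

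The step I expect to require the most attention is the inductive propagation of $h^0(\Omega^{n-1}_{X^j})=0$ through the tower, since one must reverify at each level that the ramification divisor $R^j$ is ample in order to invoke Corollary~\ref{inj}. Once this bookkeeping is in order, no genuinely new idea is needed beyond those already present in the proof of Theorem~\ref{application}: the statement really is a formal consequence of that theorem applied at the last stage of the iterated construction.
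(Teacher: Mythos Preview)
Your approach has a genuine gap. You invoke Theorem~\ref{application} with $(X,Y,S,\tilde S)=(X^{i-1},X^i,S^{i-1},S^i)$, but that theorem requires $S$ to be a \emph{very general} member of $|H|$ on $X$. In the tower, $S^{i-1}=\pi_{i-2}^{-1}\cdots\pi_0^{-1}(S^0)$ lies in the proper sublinear system $\pi_{i-2}^*\cdots\pi_0^*|H^0|\subsetneq |H^{i-1}|$ (the containment is strict because $H^0(X^{i-1},H^{i-1})$ has nonzero anti-invariant summands), so $S^{i-1}$ is \emph{not} very general in $|H^{i-1}|$ once $i\ge 2$. This is not a cosmetic issue: the proof of Theorem~\ref{application} uses the irreducibility of $\bT^{+}\cong\bT_{S^{i-1}}$, which fails here --- indeed $\bT_{S^{i-1}}$ splits into $i$ pieces coming from the successive anti-invariant parts. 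Your argument would in fact yield the strictly stronger conclusion that $Z$ is birational to $S^{i-1}$ whenever $p_g(Z)>0$, and that conclusion is false: the composite $\pi_0\circ\cdots\circ\pi_{i-1}:S^i\to S^0$ has degree $2^i>1$, $p_g(S^0)>0$ since $k_0$ is birational onto its image, yet $S^0$ is not birational to $S^{i-1}$ (their geometric genera differ, as $p_g(S^{j+1})=p_g(S^j)+h^0(K_{S^j}(M_j))>p_g(S^j)$). This is exactly why the corollary lists all $S^j$ with $j\le i-1$ and not only $S^{i-1}$.

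The paper's phrase ``by the same argument'' means re-running the \emph{proof}, not the statement: one decomposes $\bT_{S^i}$ into the $i{+}1$ irreducible pieces $H^{n-1}(S^0)_{van}$ and $H^{n-1}(S^j)^-_{van}$ for $1\le j\le i$, and then, assuming $p_g(Z)\neq 0$, identifies $f^{i\,*}\bT_Z$ with a sum of some of these and reads off a factorization through the corresponding $k_j$, $k'_j$, or $\tilde k$. Your inductive verification of $h^0(\Omega^{n-1}_{X^{j}})=0$ is correct and is needed here; what is missing is the parallel inductive control of the transcendental lattice rather than a black-box call to Theorem~\ref{application}.
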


\begin{corollary} Let $X=\bP^2$ and let $H=\cO_{\bP^2}(d)$ where $d\ge 4$. Set $M=\cO_{\bP^2}(a)$ with $1\leq a\leq d-1$. Let $\pi: Y\to X$ be a double cover branched at a very general element $B\in |M^{\otimes 2}|$.    Let $C$ be a very general element in $|H|$ and let $\tilde C=\pi^{-1}(C)$. Suppose there is a finite  map $f:\tilde C \to Z$ where $Z$ is a smooth projective curve. Then $Z$ is isomorphic either to $\tilde C$, $C$, or $Z=\bP^1.$
\end{corollary}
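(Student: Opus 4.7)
The plan is to reduce the statement to Theorem~\ref{application} in dimension $n=2$, applied with $X=\bP^2$, $H=\cO_{\bP^2}(d)$ and $M=\cO_{\bP^2}(a)$, so that $S=C$, $\tilde S=\tilde C$ and the map $Y\to\bP^2$ is the given double cover. The first step is to verify the running hypotheses. Very ampleness of $\tilde H$ on $Y$ follows from Lemma~\ref{ample}, since $H\otimes M^{-1}=\cO_{\bP^2}(d-a)$ is globally generated (in fact very ample) in the range $1\le a\le d-1$. The branch divisor $B\in|\cO_{\bP^2}(2a)|$ is very ample, and smoothness for a very general $B$ is Bertini. Ampleness of the ramification $R\subset Y$, needed to invoke Corollary~\ref{inj}, follows because $\pi^{*}B=2R$ is the pullback of an ample divisor under the finite morphism $\pi$, hence ample, and ampleness descends through the positive multiple.

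Next I would verify Assumption~\ref{xx} in this setting. Item~(1), $h^0(\Omega^{1}_{\bP^2})=0$, is immediate; item~(2) was just checked. For item~(3), $K_C=\cO_C(d-3)$, and a smooth plane curve of degree $d\ge 4$ is non-hyperelliptic (its gonality is $d-1\ge 3$), so the canonical map $k\colon C\dashrightarrow|K_C|$ is birational onto its image. The auxiliary map $k'\colon C\dashrightarrow|K_C(M)|=|\cO_C(d+a-3)|$ corresponds to a complete linear system that contains $|\cO_C(d-3)|$ as a subseries, so $k'$ dominates $k$ and is therefore also birational onto its image.

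With all hypotheses in place, I would apply Theorem~\ref{application} and split on the degree of $f$. If $\deg f=1$, then $f$ is a finite birational morphism between smooth projective curves, hence an isomorphism, giving $Z\cong\tilde C$. If $\deg f\ge 2$, Theorem~\ref{application} yields either that $Z$ is birational to $S=C$ (with $f$ identified with $\pi$ up to birational automorphisms of source and target), or that $p_g(Z)=0$. Since birational smooth projective curves are isomorphic, the first alternative gives $Z\cong C$, and since $\bP^1$ is the only smooth projective curve of geometric genus zero, the second gives $Z\cong\bP^1$. Combining, $Z$ is isomorphic to one of $\tilde C$, $C$ or $\bP^1$.

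The only real obstacle is the bookkeeping verification of Assumption~\ref{xx} in the concrete $(\bP^2,\cO_{\bP^2}(d),\cO_{\bP^2}(a))$ setting, and in particular the check that $|K_C(M)|$ still gives a birational map (reduced here to the fact that it dominates the canonical series). Once this is in hand the substantive content of the corollary is entirely absorbed in Theorem~\ref{application}, which is used as a black box.
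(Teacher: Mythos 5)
Your proposal is correct and follows essentially the same approach as the paper: verify that Assumption~\ref{xx} holds in the $(\bP^2,\cO_{\bP^2}(d),\cO_{\bP^2}(a))$ setting, then apply Theorem~\ref{application}, interpreting ``birational'' and ``$p_g=0$'' for smooth projective curves as isomorphism to $\tilde C$, $C$, or $\bP^1$. The paper's own proof is terser --- it simply notes that $H\otimes M^{-1}$, $K_C$, and $K_C+M$ are all very ample (which directly yields birationality of $k$ and $k'$ rather than your gonality/domination argument) and that $H^1(\bP^2,\bQ)=0$ --- but the substance is identical.
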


\begin{proof} Under our hypothesis, $H\otimes M^{-1}=\cO_{\bP^2}(d-a)$, $K_C=\cO_C(d-3)$, and $K_C+M=\cO_C(d+a-3)$ are very ample. Moreover, we have $H^1(\bP^2, \bQ)=0$.
\end{proof}

\begin{corollary}\label{van}
Let $X=\bP^3$ and let $H=\cO_{\bP^3}(d)$ where $d\ge 5$.  Let $M=\cO_{\bP^3}(a)$ with $1\leq a\leq d-1$. Let $\pi: Y\to X$ be a double cover branched at a very general element $B\in |M^{\otimes 2}|$.  Let $S$ be a very general element in $|H|$ and let $\tilde S=\pi^{-1}(S)$. Suppose there is a dominant rational map $f:\tilde S \dasharrow Z$ where $Z$ is a smooth projective surface. Then $Z$ is either birational to $\tilde S$, $S$, or we have $p_g(Z)=0.$
\end{corollary}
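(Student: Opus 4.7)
The plan is to recognize the corollary as an immediate specialization of Theorem~\ref{application} to the setting $X = \bP^3$, $H = \cO_{\bP^3}(d)$, $M = \cO_{\bP^3}(a)$. The task therefore reduces to checking the three items of Assumption~\ref{xx} in this situation and handling the case $\deg f = 1$ separately.

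For the verification, item (1), $h^0(\Omega^2_{\bP^3}) = 0$, is the standard Hodge-number computation for projective space. For item (2), a very general $B \in |\cO_{\bP^3}(2a)|$ is smooth by Bertini, and $\cO_{\bP^3}(2a)$ is very ample for $a \geq 1$; in particular $B$ is ample, so by Lemma~\ref{ample} together with the remark following Corollary~\ref{inj}, both the pullback $\tilde H$ is very ample on $Y$ (using that $H \otimes M^{-1} = \cO_{\bP^3}(d-a)$ is globally generated, which holds for $d-a \geq 1$) and the ramification divisor $R \subset Y$ is ample. For item (3), the adjunction formula gives $K_S = \cO_S(d-4)$ and $K_S(M) = \cO_S(d-4+a)$; under the hypotheses $d \geq 5$ and $a \geq 1$, both line bundles have positive degree and are restrictions of very ample line bundles on $\bP^3$, hence are themselves very ample on $S$. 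Consequently $k: S \dashrightarrow |K_S|$ and $k': S \dashrightarrow |K_S(M)|$ are closed immersions, which are \emph{a fortiori} birational onto their images.

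With Assumption~\ref{xx} established, the proof concludes as follows. If $\deg f > 1$, Theorem~\ref{application} applies and yields that $Z$ is birational to $S$ or $p_g(Z) = 0$. If $\deg f = 1$, then $f$ is birational by definition, so $Z$ is birational to $\tilde S$. These three alternatives match the statement. The substantive work has already been absorbed into Theorem~\ref{irreducible}, Corollary~\ref{inj}, and Theorem~\ref{application}; the only point requiring any care in the present corollary is to recognize that the numerical bounds $d \geq 5$ and $1 \leq a \leq d-1$ are precisely those forced by Assumption~\ref{xx}, so no genuinely new obstacle arises.
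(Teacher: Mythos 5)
Your proof is correct and is essentially the paper's argument spelled out in more detail: verify the three items of Assumption~\ref{xx} in the $\bP^3$ case (plus the very ampleness of $\tilde H$ via Lemma~\ref{ample}) and invoke Theorem~\ref{application}, treating $\deg f = 1$ separately to recover the alternative that $Z$ is birational to $\tilde S$. The paper's own proof is terser --- it just records that $H\otimes M^{-1}=\cO_{\bP^3}(d-a)$, $K_S=\cO_S(d-4)$, $K_S+M=\cO_S(d+a-4)$ are very ample and that $h^0(\Omega^2_{\bP^3})=0$ --- so there is no substantive difference in approach.
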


\begin{proof} Under our hypothesis, $H\otimes M^{-1}=\cO_{\bP^3}(d-a)$, $K_S=\cO_S(d-4)$, and $K_S+M=\cO_S(d+a-4)$ are very ample. And $h^0(\Omega^2_{\bP^3})=0$, hence it satisfies Assumption~\ref{xx}.
\end{proof}

\subsection{Rational maps}
By using deformation of curves and arguments similar to those in Section 2 in \cite{lp}, we can show :

\begin{theorem} \label{surface} Let $\pi: Y\to \bP^3$ be a double cover branched at a very general element in the linear system $|\cO_{\bP^3}(2)|$. Let $H=\cO_{\bP^3}(d)$ where $d\ge 7$ and let $S$ be a very general element in $|H|$. Set $\tilde S=\pi^{-1}(S).$
Suppose there is a dominant rational map $f:\tilde S \dasharrow Z$ where $Z$ is a smooth projective surface. Then $Z$ is either birational to $\tilde S$, $S$, or $\bP^2.$
\end{theorem}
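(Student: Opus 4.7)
The plan is to first reduce to the case $p_g(Z)=0$ via Corollary~\ref{van}, and then to rule out every non-rational $Z$ with $p_g(Z)=0$ by adapting the deformation-theoretic argument of Section~2 of \cite{lp} to the double-cover setting. For the reduction I would apply Corollary~\ref{van} with $X=\bP^3$, $H=\cO_{\bP^3}(d)$, $M=\cO_{\bP^3}(1)$ (so $M^{\otimes 2}=\cO_{\bP^3}(2)$ is the class of the branch quadric) and $a=1$; since $d\ge 7 \ge 5$ and $1\le a\le d-1$, the corollary yields the trichotomy that $Z$ is birational to $\tilde S$, to $S$, or has $p_g(Z)=0$. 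It therefore suffices to show, assuming $p_g(Z)=0$, that $Z$ is birational to $\bP^2$.

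Assume for contradiction that $p_g(Z)=0$ and $Z$ is not rational. By the Enriques--Kodaira classification, $Z$ is one of: irregular ruled, Enriques, bielliptic, properly elliptic with $p_g=0$, or of general type with $p_g=0$. To rule each case out, I would mirror the deformation argument of Section~2 in \cite{lp}, now carried out for the double covers $\tilde C_t$ in place of plane curves. Specifically, for a very general $S\in|H|$ and $B\in|\cO_{\bP^3}(2)|$ and a general pencil $\{H_t\}_{t\in\bP^1}$ of planes in $\bP^3$, the sections $C_t=H_t\cap S$ are smooth plane curves of degree $d$ and their preimages $\tilde C_t=\pi^{-1}(C_t)$ are smooth double covers branched at the $2d$ points $C_t\cap B$. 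The restriction $f|_{\tilde C_t}: \tilde C_t \dasharrow Z$ is generically finite with image $\Gamma_t\subset Z$ constrained by the Kodaira class of $Z$ (via its Albanese, its elliptic fibration, or its canonical map). The core assertion, paralleling \cite{lp}, is that for $d\ge 7$ the moduli of the triples $(S,B,\tilde C_t)$ dominates the moduli of admissible image curves $\Gamma_t$ in each class, contradicting the existence of $f$. In the general-type subcase one may additionally invoke Theorem~\ref{irreducible} and Corollary~\ref{inj}: the injection $f^\ast: \bT_Z\hookrightarrow \bT_{\tilde S}=\bT^+\oplus \bT^-$ of irreducible Hodge structures forces $f$ to factor through one of $k$, $k'$ or $\tilde k$, which is incompatible with $p_g(Z)=0$.

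The main obstacle I expect is the dimension count at the heart of the deformation argument, which is finer than in \cite{lp} because of the extra structure carried by the double covers: the branch divisor $C_t\cap B\subset C_t$ contributes $2d$ additional moduli parameters and shifts the genus of $\tilde C_t$ by Riemann--Hurwitz, so one must balance both the enlargement of the source moduli and the enlargement of the possible target moduli. The bound $d\ge 7$, as opposed to $d>4$ in \cite{lp}, should precisely reflect this trade-off. The most delicate cases are likely the Enriques and properly elliptic ones, since those surfaces carry their own involutions and elliptic fibrations that could a priori be compatible with the involution $j$ on $\tilde S$; distinguishing these possibilities will require careful use of the $j$-eigenspace decomposition of the canonical series of $\tilde S$ and $\tilde C_t$, leaning on the ampleness of the ramification divisor granted by Corollary~\ref{inj}.
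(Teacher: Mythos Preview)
Your reduction via Corollary~\ref{van} is correct, and the overall strategy --- pass to curves on $\tilde S$ and run a deformation/moduli count against the possible targets $Z$ --- is exactly what the paper does. But several of the concrete steps you outline are either unnecessary detours or actual gaps.

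First, you are missing the key geometric observation that makes the dimension count tractable: since the branch locus is a smooth quadric, $Y$ is a smooth quadric hypersurface in $\bP^4$, and a very general hyperplane section of $\tilde S\subset Y\subset\bP^4$ is a curve $D$ of bidegree $(d,d)$ on a smooth quadric surface $Q\cong\bP^1\times\bP^1$. Your curves $\tilde C_t=\pi^{-1}(C_t)$ are precisely these, but you treat them abstractly as double covers of plane curves branched at $2d$ points, which obscures the numerics. With the $(d,d)$ description one gets directly $g(D)=(d-1)^2$ and $h^0(N_{D|Q})=d^2+2d+1$, and the count goes through cleanly.

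Second, your case split (irregular ruled, Enriques, bielliptic, properly elliptic, general type) is too wide. Since $\tilde S$ is a smooth ample divisor in the simply connected $Y$, it is simply connected, so by \cite{gp} one has $\pi_1(Z)=1$ and hence $q(Z)=0$; this eliminates the irregular ruled, Enriques and bielliptic cases at once. The ``delicate'' analysis you anticipate for Enriques and bielliptic surfaces, with $j$-eigenspace decompositions, is not needed.

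Third, your proposed extra Hodge-theoretic step in the general-type subcase is circular: the injection $f^\ast:\bT_Z\hookrightarrow\bT_{\tilde S}$ and the irreducibility from Theorem~\ref{irreducible} and Corollary~\ref{inj} are precisely what already went into Corollary~\ref{van} to force $p_g(Z)=0$. Once $p_g(Z)=0$ there is no nontrivial $\bT_Z$ to inject, so this yields nothing further; the general-type case must be handled by the dimension count alone (the paper uses the $19$-parameter bound for minimal $p_g=q=0$ surfaces of general type, giving $4d-26+\tfrac12\deg\kappa^\ast K_Z\le 0$, impossible for $d\ge 7$).

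Finally, you have not addressed what happens if the restriction $f|_{D}$ fails to be birational onto its image. The paper closes this by observing that then the normalization of $f(D)$ is either a very general plane curve of degree $d$ (so one is back in the situation of \cite{lp}) or rational (so $Z$ is covered by rational curves and $f$ cannot be dominant onto a non-rational $Z$). Without this step the argument is incomplete.
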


\begin{proof} Suppose that $Z$ is not birational to $\tilde S$ and $S$. By Corollary~\ref{van}, it is enough to treat the case that $p_g(Z)=0$. We note that $Y$ is a quadric hypersurface in $\bP^4$. Let $D$ be a very general curve of $(d, d)$ type in a smooth quadric surface $Q$. Then we claim that there is no birational immersion $\kappa$ from $D$ into any smooth projective surface $Z$ with $p_g(Z)=q(Z)=0, \pi_1(Z)=1$, and non-negative Kodaira dimension if $d\ge 7$. The proof is similar to the argument in Section 2 in \cite{lp}.

We can assume that $Z$ is minimal because $D$ is a very general curve of $(d, d)$ type in $Q$.  Let $U$ be the Kuranishi space of deformation of $\kappa$. Since $\kappa$ is a very general birational immersion, a basic result (see Corollary 6.11 in \cite{ac}, and  Chapter XXI in \cite{ACG}) gives that
\[ \dim U\leq h^0(\cO_D(N_{D\mid Q}))-6=d^2+2d-6\]
by Riemann-Roch theorem and $h^1(\cO_D(N_{D\mid Q}))=0$.
And $g(D)=d^2-2d+1$.

Suppose that $D$ can be birationally immersed in $Z$ of general type with $p_g(Z) = q(Z) = 0, \pi_1(Z) = 1$. Then by the same argument in Proposition 2.3 in \cite{lp}
\[d^2+2d-6-19 \le g(D)-\frac{{\rm deg}\kappa^*(K_Z)}{2},\]
since minimal surfaces of general type with $p_g=q=0$ depends on 19 parameters (Corollary in \cite{gp}).
It implies that
\[4d-26+\frac{{\rm deg}\kappa^*(K_Z)}{2}\le 0.\]
So we get a contradiction if $d\ge 7$ because ${\rm deg}\,\kappa^*(K_Z) > 0$.

\medskip
Now, suppose $D$ can be birationally immersed in $Z$  with $p_g(Z) = q(Z) = 0, \pi_1(Z) = 1$, and of Kodaira dimension one.
Then
\[d^2+2d-6-10 \le g(D)-\frac{{\rm deg}\kappa^*(K_Z)}{2},\]
because minimal surfaces of Kodaira dimension one with $p_g(Z) = q(Z) = 0$, and $\pi_1(Z) = 1$ depend also on 10 parameters (cf. Proof of Proposition 3.5 in \cite{lp}).
It implies that
\[4d-17+\frac{{\rm deg}\kappa^*(K_Z)}{2}\le 0.\]
So we get a contradiction if $d\ge 5$ because ${\rm deg}\,\kappa^*(K_Z) \ge 0$. We prove the claim.

\medskip
Suppose there is a dominant rational map $p$ from $\tilde S$ to a smooth projective surface $Z$. Let $Z$ be a non-rational surface. By Corollary~\ref{van} we have $p_g(Z)=0$, and by the argument in \cite{gp} we have $\pi_1(Z)=1$. Since the intersection of $\tilde S$ and a very general hyperplane section of $Y$ is $D$,  we may assume that a general point of $Z$ belongs $f_D(D).$  By the above claim, $f_D$ cannot be birational. Therefore, we have two possible cases. The normalization of $f_D(D)$ is a very general plane curve of degree $d$ or rational. If the normalization of $f_D(D)$ is a very general plane curve of degree $d$ then we have a birational immersion from a very general plane curve of degree $d$ to $Z$. Then we get a contradiction by the result (proof of Theorem 1.1) in \cite{lp}.  If the normalization of $f_D(D)$ is rational then $Z$ is a ruled surface. It follows that $f$ is not dominant.
Therefore we get a contradiction.
\end{proof}


\end{document}